\documentclass[a4paper,12pt]{article}
\usepackage[centertags]{amsmath}
\usepackage{amsfonts}
\usepackage{amssymb}
\usepackage{amsthm}
\usepackage{amsmath}
\usepackage{dsfont}
\usepackage{graphicx}
\usepackage{tikz}
\definecolor {processblue}{cmyk}{0.96,0,0,0}
\usepackage{amsfonts,graphicx,amsmath,amssymb,hyperref,color}
\usepackage[english]{babel}
\usepackage{authblk}

\setlength{\textwidth}{5.7in}
\setlength{\textheight}{8in}

\newtheorem{thm}{Theorem}[section]

\newtheorem{lem}[thm]{Lemma}
\newtheorem{remark}{Remark}[section]

\numberwithin{equation}{section}

\begin{document}

\renewcommand{\thefootnote}{*}

\begin{center}
{\Large \bf On zeros of some entire  functions
}
\end{center}

\vskip 2mm \centerline{Bing He  }
\begin{center}
{\footnotesize  College of Science, Northwest A\&F University\\
   Yangling 712100, Shaanxi, People's Republic of China\\
{\tt yuhe001@foxmail.com}
\\[10pt]
}
\end{center}

\vskip 0.7cm \noindent{\small{\bf Abstract.}} Let \begin{equation*}
  A_{q}^{(\alpha)}(a;z)=\sum_{k=0}^{\infty}\frac{(a;q)_{k}q^{\alpha k^2} z^k}{(q;q)_{k}},
\end{equation*}
where $\alpha >0,~0<q<1.$ In a paper of Ruiming Zhang, he asked under what conditions the zeros of the entire
function $A_{q}^{(\alpha)}(a;z)$ are all real and established some results on the zeros of $A_{q}^{(\alpha)}(a;z)$ which present a partial answer to that question. In the present paper, we will set up some results on certain entire  functions which includes that $A_{q}^{(\alpha)}(q^l;z),~l\geq 2$ has only infinitely many negative zeros that gives a partial answer to Zhang's question. In addition, we establish some results on zeros of certain entire  functions involving the Rogers-Szeg\H{o} polynomials and the Stieltjes-Wigert polynomials.
\vskip 3mm \noindent {\it Keywords and phrases}: Zeros of entire  functions,  P\'{o}lya
frequence sequence, Vitali's theorem, Hurwitz's theorm, Rogers-Szeg\H{o} polynomials, Stieltjes-Wigert polynomials
\vskip 3mm \noindent {\it 2010 MSC}: 30C15; 33D15; 30C10
\section{Introduction}
Recall that entire functions are functions that are holomorphic in the whole complex plane. Given an entire function $f(z)=\sum_{k=0}^\infty a_{k}z^k,$ then the order of $f(z)$ can be computed by \cite [(2.2.3)]{B}
\begin{equation}\label{e1-1}
\rho(f)=\limsup_{k\rightarrow \infty}\frac{k\log k}{-\log a_{k}}.
\end{equation}
Following \cite{IZ}, we define the entire function $A_{q}^{(\alpha)}(a;z)$ by
\begin{equation*}
  A_{q}^{(\alpha)}(a;z)=\sum_{k=0}^{\infty}\frac{(a;q)_{k}q^{\alpha k^2} z^k}{(q;q)_{k}},
\end{equation*}
where $\alpha >0,~0<q<1$ and
\begin{equation*}
  (a;q)_{0}=1,~(a;q)_{k}=\prod_{j=0}^{k-1}(1-aq^{j})~(k\geq 1).
\end{equation*}
It is easily seen that
\begin{align*}
   A_{q}^{(\frac{1}{2})}(q^{-n};z)&=\sum_{k=0}^{\infty}\frac{(q^{-n};q)_{k}q^{\frac{k^2}{2}} z^k}{(q;q)_{k}}=(q;q)_{n}S_{n}(zq^{\frac{1}{2}-n};q),\\
   A_{q}^{(1)}(0;z)&=\sum_{k=0}^{\infty}\frac{q^{k^2}z^k}{(q;q)_{k}}=A_{q}(-z),
\end{align*}
where $A_{q}(z)$ and $S_{n}(z;q)$ are the Ramanujan entire function and the Stieltjes-Wigert
polynomial respectively \cite{I}. So $A_{q}^{(\alpha)}(a;z)$ generalizes both $A_{q}(z)$ and $S_{n}(z;q).$ It is well-known that both of them have only real positive
zeros. Therefore,  Zhang in \cite{Z} asked under what conditions the zeros of the entire
function $A_{q}^{(\alpha)}(a;z)$ are all real. In that paper, Zhang proved that $A_{q}^{(\alpha)}(-a;z)~(a\geq 0, \alpha> 0, 0<q<1)$ has only infinitely many negative zeros and
$A_{q}^{(\alpha)}(q^{-n};z)~(n\in \mathbb{N}, \alpha\geq 0, 0<q<1)$  has only finitely many positive zeros, which gave a partial answer to that question. In addition, Zhang obtained
a result on the negativity of zeros of an entire function including many well-known entire functions.

Our motivation for the present work emanates from Zhang's question. In this paper, we will establish the following results which present a partial answer to Zhang's question.
\begin{thm}\label{t1}Let $\alpha > 0$ and $0<q<1.$ Then

\emph{(i)}if $l\geq 2$ is an integer, then $A_{q}^{(\alpha)}(q^l;z)$ has only infinitely many real zeros and all of them are negative;

\emph{(ii)}if $m$ and $n$ are nonnegative integers such that at least one of them is positive,~$\{l_{j}\}_{j=1}^{m}$ are integers not less than $2,~0<q_{j}<1~(1\leq j\leq m)$ and $\nu_{r}>-1,~0<q_{r}<1~(1\leq r\leq n),$ then the function
\begin{equation*}
\sum_{k=0}^{\infty}\prod_{j=1}^{m}\frac{(q_{j}^{l_{j}};q_{j})_{k}}{(q_{j};q_{j})_{k}}\frac{q^{\alpha k^2}}{\prod_{r=1}^n (q_{r},q_{r}^{\nu_{r}+1};q_{r})_{k}}z^k
\end{equation*}
has only infinitely many real zeros and all of them are negative;

\emph{(iii)}if $m\geq 0$  and $n\geq 1$ are integers,~$\{l_{j}\}_{j=1}^{m}$ are integers not less than $2$ and $\nu_{r}\geq0 ~(1\leq r\leq m),$ then the function
\begin{equation*}
\sum_{k=0}^{\infty}\frac{\prod_{j=1}^{m}(l_{j})_{k}}{(k!)^{m+n}\prod_{r=1}^n (\nu_{r}+1)_{k}}z^k
\end{equation*}
where $(a)_{k}$ is defined by $(a)_{0}=1,~(a)_{k}=a(a+1)\cdots(a+k-1)~(k\geq 1),$
has only infinitely many real zeros and all of them are negative.
\end{thm}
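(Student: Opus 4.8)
Write the function under consideration as $F(z)=\sum_{k\ge0}c_{k}z^{k}$ with $c_{k}>0$. In every case $c_{k}$ decays faster than any geometric sequence --- because of the factor $q^{\alpha k^{2}}$ in parts (i) and (ii) and the factor $1/(k!)^{m+n}$ in part (iii) --- so \eqref{e1-1} gives $\rho(F)=0$; thus $F$ is transcendental of genus $0$, hence automatically has infinitely many zeros, and the Hadamard factorisation theorem shows that \emph{``$F$ has only infinitely many real zeros, all of them negative''} is equivalent to \emph{``$F$ lies in the Laguerre--P\'olya class $\mathcal L^{+}$ of entire functions whose zeros are all real and non-positive''}. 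So the whole theorem reduces to proving $F\in\mathcal L^{+}$; moreover part (iii) is the term-by-term limit of part (ii) as $q_{j},q_{r}\to1^{-}$ (after rescaling $\alpha$), so Vitali's theorem together with Hurwitz's theorem will let me deduce (iii) from (ii). I would therefore concentrate on (i) and (ii).

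\textbf{Reduction to building blocks.} The plan for (i) and (ii) is to split the coefficient $c_{k}$ into Hadamard factors, each controlled separately, and to reassemble them. First, the $q$-binomial theorem gives $\frac{(q^{l};q)_{k}}{(q;q)_{k}}=\binom{k+l-1}{l-1}_{q}$ together with
\begin{equation*}
\sum_{k\ge0}\frac{(q^{l};q)_{k}}{(q;q)_{k}}z^{k}=\prod_{i=0}^{l-1}\frac{1}{1-q^{i}z},\qquad
\sum_{k\ge0}\frac{z^{k}}{(q;q)_{k}}=\prod_{i\ge0}\frac{1}{1-q^{i}z},
\end{equation*}
so each of these Hadamard factors is a P\'olya frequency sequence whose generating function is a (finite or infinite) product $\prod(1-\beta z)^{-1}$ with $\beta>0$. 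Second, the base cases: the instance $a=0$ of Zhang's theorem quoted in the introduction gives $A_{q}^{(\alpha)}(0;z)=\sum_{k}q^{\alpha k^{2}}z^{k}/(q;q)_{k}\in\mathcal L^{+}$, and the corresponding real-zeros statement for the Stieltjes--Wigert and Jackson $q$-Bessel functions (and their mixed-base variants) --- obtainable, for instance, as locally uniform limits of $q$-Laguerre polynomials, which are orthogonal on $(0,\infty)$ and so have only positive zeros --- gives $\sum_{k}q^{\alpha k^{2}}z^{k}/\big((q;q)_{k}(q^{\nu+1};q)_{k}\big)\in\mathcal L^{+}$ for $\nu>-1$. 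Third, the engine: I would prove that if $g(z)=\sum g_{k}z^{k}\in\mathcal L^{+}$ and $(\gamma_{k})$ is a P\'olya frequency sequence of the product type above with at least one non-trivial factor $(1-\beta z)^{-1}$, $0<\beta<1$, to spare, then $\sum_{k}\gamma_{k}g_{k}z^{k}\in\mathcal L^{+}$; this would be done by exhibiting explicit polynomial sections with only non-positive zeros and passing to the limit by Hurwitz's theorem, with Vitali's theorem securing local uniform convergence (hence non-degeneracy of the limit). Granting the engine, part (i) follows on writing $c_{k}=\binom{k+l-1}{l-1}_{q}q^{\alpha k^{2}}$ and applying it with the $l-1\ge1$ spare factors $(1-q^{i}z)^{-1}$, $1\le i\le l-1$, absorbing the partial-theta factor $(q^{\alpha k^{2}})$; and part (ii) follows by iterating the engine, starting the induction from the $q$-Bessel/Stieltjes--Wigert base case (when $n\ge1$) or from $A_{q}^{(\alpha)}(q^{l_{1}};z)$ (when $n=0,\ m\ge1$) and peeling off the remaining $\binom{k+l_{j}-1}{l_{j}-1}_{q_{j}}$ and $1/(q_{r};q_{r})_{k}$ Hadamard factors one at a time. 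Part (iii) is then the $q\to1$ limit of (ii), or a direct re-run of the engine over $\mathbb R$ with $(1-z)^{-l_{j}}$ supplying the $(l_{j})_{k}/k!$ factors and $I_{0}(2\sqrt z)=\sum_{k}z^{k}/(k!)^{2}$, $\ \sum_{k}z^{k}/\big(k!(\nu_{r}+1)_{k}\big)\in\mathcal L^{+}$ supplying the rest.

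\textbf{The main obstacle.} Everything above other than the engine is routine bookkeeping with the $q$-binomial theorem and elementary asymptotics; the engine is where the real difficulty sits, and it is genuinely delicate because Hadamard products do \emph{not} preserve $\mathcal L^{+}$ in general. Indeed, multiplying the constant P\'olya frequency sequence $(1,1,1,\dots)$ by $(q^{\alpha k^{2}})$ produces the partial theta function $\sum_{k}q^{\alpha k^{2}}z^{k}$, which has non-real zeros once $q$ is close enough to $1$; this is exactly why the case $l=1$ of (i) is excluded, and it shows that no soft closure theorem can suffice. What rescues $l\ge2$ is that the extra factor $(1-qz)^{-1}$ --- equivalently, an extra pole at a positive point, reflecting the borderline log-concavity of $(c_{k})$ that holds for $l\ge2$ and fails at $l=1$ --- supplies just enough damping. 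Quantifying this --- producing, for every $N$, an explicit degree-$N$ polynomial with only negative real zeros that converges locally uniformly to $F$, uniformly in the borderline dependence on $\alpha$ --- is the crux, and I expect to carry it out through a suitable $q$-orthogonal polynomial family (of $q$-Laguerre or Al-Salam--Carlitz type, carrying the parameter $q^{l}$) together with Hurwitz's theorem.
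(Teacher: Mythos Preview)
Your overall architecture --- P\'olya frequency (PF) sequences built from the $q$-binomial theorem, then Vitali plus Hurwitz to pass from partial sums to the entire function, then an order computation to force infinitely many zeros --- is exactly the paper's. The genuine gap is your ``engine.'' As you state it, the engine would need $g_{k}=q^{\alpha k^{2}}$ on one side, i.e.\ the partial theta function, which you yourself note is \emph{not} in $\mathcal L^{+}$ for $q$ near $1$; so the hypothesis fails and the ``spare pole'' heuristic is doing all the work. Your proposed fix --- manufacture approximating $q$-orthogonal polynomials of $q$-Laguerre/Al-Salam--Carlitz type carrying the parameter $q^{l}$ --- is speculative and, even if it can be made to work, is much harder than necessary.

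The paper sidesteps the whole issue with two classical facts you are missing. First (Carnicer--Pe\~na--Pinkus), for every $0<q<1$ the sequence $\{q^{\alpha k^{2}}/k!\}_{k\ge0}$ is itself a PF sequence; the offending $q^{\alpha k^{2}}$ never has to stand alone. Second (Schur/Malo, recorded here as Lemma~\ref{l2}(iii)), if $\{a_{k}\}$ and $\{b_{k}\}$ are PF then so is $\{k!\,a_{k}b_{k}\}$. Taking $a_{k}=(q^{l};q)_{k}/(q;q)_{k}$ (PF by your own $q$-binomial argument) and $b_{k}=q^{\alpha k^{2}}/k!$, the $k!$ cancels and $\{(q^{l};q)_{k}q^{\alpha k^{2}}/(q;q)_{k}\}$ is PF, so every partial sum has only nonpositive real zeros; Vitali and Hurwitz finish (i). Part (ii) is then pure iteration of Lemma~\ref{l2}(ii)--(iii) over the various $q_{j},q_{r}$, and (iii) is the $q\to1^{-}$ limit as you outline. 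One small correction: in part (iii) the order is $\rho=1/(2n)$, not $0$ (there is no $q^{\alpha k^{2}}$ left); since $n\ge1$ this is still $\le 1/2$, which suffices for the ``infinitely many zeros'' conclusion.
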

It should be mentioned that in \cite [Theorem 4]{KLV} Katkova
et al. proved that there exists a constant $q_{\infty}~(\approx 0.556415)$ such that the function $A_{q}^{(\alpha)}(q;z)$ has only real zeros if and only if $q\leq q_{\infty}.$ So the similar result for $A_{q}^{(\alpha)}(q^l;z)$ does not hold for  $l=1.$

The Gaussian binomial coefficients are $q$-analogs of the binomial coefficients, which are given by
$${n\brack k}_{q}=\frac{(q;q)_{n}}{(q;q)_{k}(q;q)_{n-k}}.$$
We now introduce the definition of the Rogers-Szeg\H{o} polynomials which
were first investigated by Rogers \cite{R}  and then by Szeg\H{o} \cite{S}.  The Rogers-Szeg\H{o} polynomials are defined by
\begin{equation*}
   h_{n}(x,y|q)=\sum_{k=0}^n{n\brack k}_{q}x^k y^{n-k}.
\end{equation*}
If $q$ is replaced by $q^{-1}$ in the Rogers-Szeg\H{o} polynomials, then we obtain
the Stieltjes-Wigert polynomials (see \cite{S}):
\begin{equation*}
    g_{n}(x,y|q)=\sum_{k=0}^n{n\brack k}_{q}q^{k(k-n)}x^k y^{n-k}.
\end{equation*}
From \cite [Theorem 5]{Z}, we know that $ h_{n}(x|q)$ has only negative zeros for $q\geq 1$ and $g_{n}(x|,q)$ has only negative zeros for $0<q\leq 1,$ where $h_{n}(x|q)$ and
$g_{n}(x|,q)$ are defined by
\begin{equation*}
   h_{n}(x|q):=h_{n}(x,1|q)=\sum_{k=0}^n{n\brack k}_{q}x^k
\end{equation*}
and
\begin{equation*}
    g_{n}(x|q):= g_{n}(x,1|q)=\sum_{k=0}^n{n\brack k}_{q}q^{k(k-n)}x^k.
\end{equation*}

Motivated by Zhang's work, we will establish the following results on zeros of certain entire functions involving the Rogers-Szeg\H{o} polynomials and the Stieltjes-Wigert polynomials.
\begin{thm}\label{tt1}Let $0<q<1.$  If  $\alpha$ is positive number and $0<x,y<1,$  then
$$\sum_{n=0}^{\infty}\frac{h_{n}(x,y|q)}{(q;q)_{n}}q^{\alpha n^2}z^n$$
has infinitely many real zeros and all of them are negative; if $-1<x,y<0$ and $\alpha\geq \frac{1}{2},$ then
\begin{equation*}
  \sum_{n=0}^{\infty}\frac{g_{n}(x,y|q)}{(q;q)_{n}}q^{\alpha n^2}z^n
\end{equation*}
has infinitely many real zeros and all of them are positive.
\end{thm}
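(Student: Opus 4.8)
\medskip\noindent
The plan is to handle the two assertions separately, in each case writing the given series as a ``$q^{\alpha n^{2}}$-deformation'' of an explicit function whose Taylor coefficients form a P\'{o}lya frequency (PF) sequence, and then appealing to the same mechanism that proves Theorem~\ref{t1}.

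\emph{Part (i).} Using Euler's identity $\sum_{k\ge 0}u^{k}/(q;q)_{k}=1/(u;q)_{\infty}$ one has
\begin{equation*}
\sum_{n=0}^{\infty}\frac{h_{n}(x,y|q)}{(q;q)_{n}}t^{n}
=\Bigl(\sum_{k=0}^{\infty}\frac{(xt)^{k}}{(q;q)_{k}}\Bigr)\Bigl(\sum_{j=0}^{\infty}\frac{(yt)^{j}}{(q;q)_{j}}\Bigr)
=\frac{1}{(xt;q)_{\infty}\,(yt;q)_{\infty}},
\end{equation*}
so $\{h_{n}(x,y|q)/(q;q)_{n}\}_{n\ge 0}$ is the convolution of the two PF sequences $\{x^{k}/(q;q)_{k}\}$ and $\{y^{j}/(q;q)_{j}\}$, hence itself a PF sequence, and its generating function is a convergent product of factors $1/(1-c_{i}z)$ with all $c_{i}=xq^{i}$ or $yq^{i}$ positive (here $0<x,y<1$ and $0<q<1$ are used). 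Thus the function in part~(i) has exactly the structure of the functions in Theorem~\ref{t1}(ii) --- a PF sequence with only positive poles, multiplied termwise by $q^{\alpha k^{2}}$ --- and the same argument applies: the series converges to an entire function of order $0$ that is a locally uniform limit of real polynomials with only negative zeros, so by Hurwitz's theorem all of its zeros are real, by positivity of its coefficients they are negative, and, being of order $0$ and non-polynomial, it has infinitely many of them.

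\emph{Part (ii).} Here I would use the factorization that appears at $\alpha=\tfrac12$. Writing $n=j+k$ in $g_{n}(x,y|q)/(q;q)_{n}=\sum_{j+k=n}q^{-jk}x^{k}y^{j}/((q;q)_{k}(q;q)_{j})$ and invoking the identity $\alpha(j+k)^{2}-jk=\tfrac12 j^{2}+\tfrac12 k^{2}+(\alpha-\tfrac12)(j+k)^{2}$ gives
\begin{equation*}
\sum_{n=0}^{\infty}\frac{g_{n}(x,y|q)}{(q;q)_{n}}q^{\alpha n^{2}}z^{n}
=\sum_{n=0}^{\infty}q^{(\alpha-\frac12)n^{2}}c_{n}z^{n},
\qquad
\sum_{n=0}^{\infty}c_{n}z^{n}=A_{q}^{(1/2)}(0;xz)\,A_{q}^{(1/2)}(0;yz).
\end{equation*}
Since $-1<x,y<0$, the coefficient $c_{n}$ has sign $(-1)^{n}$; put $\widetilde c_{n}=(-1)^{n}c_{n}>0$, so $\sum_{n}\widetilde c_{n}z^{n}=A_{q}^{(1/2)}(0;-xz)\,A_{q}^{(1/2)}(0;-yz)$. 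Because $-x,-y>0$ and $A_{q}^{(1/2)}(0;w)$ has only negative zeros (Zhang's theorem on $A_{q}^{(\alpha)}(-a;z)$, taken with $a=0$), this product has only negative zeros, hence $\{\widetilde c_{n}\}$ is a PF sequence (a convolution of two PF sequences). Now $\alpha\ge\tfrac12$ guarantees that the deformation exponent $\alpha-\tfrac12$ is $\ge 0$, so the same step as in part~(i), applied to $\{\widetilde c_{n}\}$, shows $\sum_{n}q^{(\alpha-\frac12)n^{2}}\widetilde c_{n}z^{n}$ is entire with only negative zeros; replacing $z$ by $-z$ converts these into the positive zeros of $\sum_{n}\frac{g_{n}(x,y|q)}{(q;q)_{n}}q^{\alpha n^{2}}z^{n}$, which is the claim. (When $\alpha=\tfrac12$ the second displayed equation already gives the conclusion, by Zhang's theorem.)

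\emph{The main obstacle} is the ``$q^{\alpha n^{2}}$-deformation'' step: proving that termwise multiplication of a PF sequence whose generating function has only positive poles by $q^{\alpha n^{2}}$ produces an entire function all of whose zeros are real (and hence, by positivity of the coefficients, negative), \emph{for every} $\alpha>0$. The elementary Hutchinson criterion $a_{n}^{2}\ge 4a_{n-1}a_{n+1}$ does not suffice here, since for these coefficient sequences the ratio $a_{n}^{2}/(a_{n-1}a_{n+1})$ exceeds $4$ only when $\alpha$ is large and tends to $1$ as $\alpha\to 0^{+}$; so this step must be carried through exactly as in the proof of Theorem~\ref{t1}, via a construction of real-rooted polynomial approximants together with Vitali's and Hurwitz's theorems. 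Granting that, both parts of Theorem~\ref{tt1} follow as above, the remaining points (convergence, order $0$, and the fact that the limits are not identically zero --- their constant terms equal $1$) being routine.
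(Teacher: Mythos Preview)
Your argument is correct. Part~(i) coincides with the paper's proof: both compute the generating function $\sum_{n\ge 0}\frac{h_n(x,y|q)}{(q;q)_n}t^n=\frac{1}{(xt,yt;q)_\infty}$, read off via Lemma~\ref{l1} that the coefficients form a PF sequence, multiply by the PF sequence $\{q^{\alpha k^2}/k!\}$ using Lemma~\ref{l2}(iii), and finish with Vitali/Hurwitz and an order-zero argument for the infinitude of zeros. The ``main obstacle'' you flag is precisely what Lemma~\ref{l2}(iii) together with the Carnicer--Pe\~na--Pinkus fact that $\{q^{\beta k^2}/k!\}$ is PF for every $\beta\ge 0$ handles, so it is not an obstacle once that lemma is available.

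For Part~(ii) you take a somewhat different route. The paper invokes the dual generating function $\sum_{n\ge 0}(-1)^n\frac{g_n(x,y|q)q^{\binom{n}{2}}}{(q;q)_n}t^n=(xt,yt;q)_\infty$ from~\cite{L}, which by Lemma~\ref{l1} immediately shows $\{(-1)^n g_n(x,y|q)q^{n^2/2}/(q;q)_n\}$ is PF, and then deforms by $q^{(\alpha-\frac12)n^2}$. You instead split the exponent as $\alpha n^2-jk=\tfrac12 j^2+\tfrac12 k^2+(\alpha-\tfrac12)n^2$ and factor the auxiliary generating function as $A_q^{(1/2)}(0;-xz)\,A_q^{(1/2)}(0;-yz)$, appealing to Zhang's result on the zeros of $A_q^{(\alpha)}(0;\cdot)$. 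The two arguments are in fact equivalent---your $\widetilde c_n$ equals $q^{n/2}$ times the paper's PF sequence, and Euler's expansion of $(xt,yt;q)_\infty$ is exactly the product of two $A_q^{(1/2)}(0;\cdot)$ factors. The paper's version is a bit more direct, getting PF straight from Lemma~\ref{l1} without passing through Zhang's zero theorem and Hadamard factorization; your version has the advantage of making the threshold $\alpha\ge\tfrac12$ visibly emerge from the exponent arithmetic.
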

\begin{remark}
\emph{(i)}~Applying the method which is used in the proof of Theorem \ref{tt1}, we can deduce the following results: let $0<q<1;$
if  $\alpha$ is positive number and $0<x<1,$  then
$$\sum_{n=0}^{\infty}\frac{h_{n}(x|q)}{(q;q)_{n}}q^{\alpha n^2}z^n$$
has infinitely many real zeros and all of them are negative; if $-1<x<0$ and $\alpha\geq \frac{1}{2},$ then
\begin{equation*}
  \sum_{n=0}^{\infty}\frac{g^{-}_{n}(x|q)}{(q;q)_{n}}q^{\alpha n^2}z^n
\end{equation*}
has infinitely many real zeros and all of them are positive, where $g^{-}_{n}(x|q)=g_{n}(x,-1|q).$ But we need the following results:
\begin{align*}
  \left|\frac{h_{n}(x|q)}{(q;q)_{n}}\right|&\leq \frac{1}{(q,x;q)_{\infty}},~
  \left|\frac{g^{-}_{n}(x|q)}{(q;q)_{n}}\right|\leq \frac{1}{(q,|x|;q)_{\infty}}
\end{align*}
which can be derived easily.

\emph{(ii)} We can establish certain results on the Rogers-Szeg\H{o} polynomials and the Stieltjes-Wigert polynomials by using similar method. These are analogous to (ii) and (iii) of Theorem \ref{t1}
\end{remark}

We also set up the following result which is analogous to \cite [Theorem 7]{Z}.
\begin{thm}\label{t2}
Suppose  $r$ and $s$ are two positive integers, $a_{j}(1\leq j\leq r)$ and $b_{k}(1\leq k\leq s)$ are r+s positive numbers and $\alpha> 0, ~0<q<2^{-\frac{1}{\alpha}}.$ Then there exists $K_{0}\in \mathbb{Z}_{> 0}$ such that for all integers $K\geq K_{0},$ the function
\begin{equation*}
  \sum_{k=K}^{\infty}\frac{(a_{1})_{k}(a_{2})_{k}\cdots (a_{r})_{k}}{(b_{1})_{k}(b_{2})_{k}\cdots (b_{s})_{k}}q^{\alpha k^2}z^k
\end{equation*}
has only infinitely many real zeros and all of them are negative.
\end{thm}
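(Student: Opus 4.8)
\medskip
\noindent\textbf{Plan of proof of Theorem \ref{t2}.} The idea is to split off the obvious factor $z^{K}$, reindex, and then apply the classical coefficient criterion of Hutchinson (equivalently, a sufficient condition for a sequence to be a P\'olya frequency sequence), in the spirit of the proof of \cite[Theorem 7]{Z}. Write
\begin{equation*}
a_{m}=\frac{(a_{1})_{m}(a_{2})_{m}\cdots(a_{r})_{m}}{(b_{1})_{m}(b_{2})_{m}\cdots(b_{s})_{m}}\,q^{\alpha m^{2}},
\end{equation*}
so that $a_{m}>0$ for every $m\ge 0$ (since all $a_{j},b_{l}>0$), and the function in question is $F_{K}(z)=\sum_{k=K}^{\infty}a_{k}z^{k}=z^{K}G_{K}(z)$, where $G_{K}(z)=\sum_{k=0}^{\infty}a_{K+k}z^{k}$. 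By Stirling's formula, $-\log a_{m}=\alpha m^{2}|\log q|+O(m\log m)$, so by \eqref{e1-1} both $F_{K}$ and $G_{K}$ are entire of order $0$; moreover $G_{K}$ is not a polynomial (it has infinitely many nonzero Taylor coefficients), hence, being of order $0$, it has infinitely many zeros by Hadamard's factorization theorem. It therefore suffices to prove that, for all sufficiently large $K$, every zero of $G_{K}$ is real and negative: then $F_{K}$ has a zero of multiplicity exactly $K$ at the origin and infinitely many further zeros, all real and negative, which is the content of the theorem.

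The crux is the asymptotic behaviour of the ratios $R_{m}:=a_{m}^{2}/(a_{m-1}a_{m+1})$ for $m\ge 1$. A short manipulation of the Pochhammer symbols gives
\begin{equation*}
R_{m}=q^{-2\alpha}\,\prod_{j=1}^{r}\frac{a_{j}+m-1}{a_{j}+m}\cdot\prod_{l=1}^{s}\frac{b_{l}+m}{b_{l}+m-1},
\end{equation*}
and since each of the finitely many factors tends to $1$ as $m\to\infty$, we obtain $R_{m}\to q^{-2\alpha}$. Now the hypothesis $0<q<2^{-1/\alpha}$ is precisely equivalent to $q^{\alpha}<\tfrac{1}{2}$, that is, to $q^{-2\alpha}>4$; hence there is an integer $K_{0}>0$ such that $R_{m}>4$ for all $m\ge K_{0}$. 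Fixing any integer $K\ge K_{0}$ and putting $c_{k}=a_{K+k}$, we get $c_{k}>0$ for every $k$ and $c_{k}^{2}=R_{K+k}\,c_{k-1}c_{k+1}>4\,c_{k-1}c_{k+1}$ for every $k\ge 1$.

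I would then invoke Hutchinson's theorem: if $(c_{k})_{k\ge 0}$ is a sequence of positive reals with $c_{k}^{2}\ge 4\,c_{k-1}c_{k+1}$ for all $k\ge 1$, then $\sum_{k\ge 0}c_{k}z^{k}$ is entire and all of its zeros are real and negative; applied to $G_{K}$ this finishes the argument. If one prefers to rely only on the tools emphasised in the introduction, the same conclusion can be obtained by truncation: for each $N>K$ the polynomial $z^{-K}\sum_{k=K}^{N}a_{k}z^{k}$ has positive coefficients obeying the corresponding finite inequalities, hence only real negative zeros by the polynomial version of Hutchinson's criterion (due to Kurtz); letting $N\to\infty$, the partial sums converge to $G_{K}$ locally uniformly (Vitali), and Hurwitz's theorem then shows that every zero of $G_{K}$ is a limit of zeros of the truncations, hence real and negative.

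I do not expect a genuine obstacle: once the coefficient asymptotics are established, the theorem is a clean application of Hutchinson's criterion. The two slightly delicate points are (a) recognising that the hypothesis $q<2^{-1/\alpha}$ is calibrated exactly so that the limiting ratio $q^{-2\alpha}$ lies strictly above the threshold $4$, and (b) the fact that the inequality $R_{m}>4$ holds only for a tail of indices $m$ --- which is why the statement must read ``there exists $K_{0}$ such that for all $K\ge K_{0}$''; conveniently, a single $K_{0}$ serves for all such $K$, since the tail condition does not depend on $K$. A last bookkeeping point is confirming, via \eqref{e1-1}, that the series is entire even when $r>s$, which is immediate because $q^{\alpha m^{2}}$ decays faster than any power of $m!$ grows.
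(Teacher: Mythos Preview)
Your proof is correct and follows essentially the same route as the paper's: compute the Tur\'an-type ratio $a_{m}^{2}/(a_{m-1}a_{m+1})\to q^{-2\alpha}>4$, invoke the Hutchinson/Kurtz coefficient criterion (the paper cites the equivalent \cite[Theorem~B]{DP}) to conclude that for $K\ge K_{0}$ all nontrivial zeros are real and negative, and then use that the order is $0$ (the paper appeals to \cite[Theorem~1.2.5]{I}, you to Hadamard) to guarantee infinitely many zeros. Your explicit factorisation $F_{K}(z)=z^{K}G_{K}(z)$ is, if anything, slightly more careful than the paper's own argument, which glosses over the zero of order $K$ at the origin.
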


In the next section, we will provide some lemmas which are crucial in the proof of Theorems \ref{t1} and \ref{tt1}. Section 3 is devoted to our proof of Theorems \ref{t1}--\ref{t2}.
\section{Preliminaries}
In order to prove Theorems \ref{t1} and \ref{tt1}, we need some auxiliary results. We first recall from \cite{DP} that a real entire function $f(z)$ is of Laguerre-P\'{o}lya class if
\begin{equation*}
  f(z)=cz^m e^{-\alpha z^2+\beta z}\prod_{k=1}^{\infty}\left(1+\frac{z}{z_{k}}\right)e^{-z/z_{k}},
\end{equation*}
where $c, \beta, z_{k}\in \mathbb{R}, \alpha \geq 0, m\in \mathbb{Z}_{\geq 0}$ and $\sum_{k=1}^{\infty}z_{k}^{-2}< +\infty.$

Let us recall that a real sequence $\{a_{n}\}_{n=0}^{\infty}$ is called a P\'{o}lya frequence (or PF) sequence if the infinite
matrix $(a_{j-i})_{i,j=0}^{\infty}$ is totally positive, i.e. all its minors are nonnegative, where we use the notation $a_{k}=0$ if $k<0.$ This concept can be extended to finite
sequences in the obvious way by completing the sequence with zero terms.
\begin{lem}\emph{(See \cite{AESW})}\label{l1}The sequence $\{a_{k}\}_{k=0}^{\infty}$ is a PF sequence if and only if the convergent series $\sum_{k=0}^{\infty}a_{k}z^k$ satisfies
\begin{equation*}
  \sum_{k=0}^{\infty}a_{k}z^k=cz^me^{\gamma z}\prod_{k=1}^{\infty}\frac{1+\alpha_{k}z}{1-\beta_{k}z},
\end{equation*}
where $c\geq 0, \gamma\geq 0, \alpha_{k}\geq 0, \beta_{k}\geq 0, m\in \mathbb{Z}^{+}$ and $\sum_{k=1}^{\infty}(\alpha_{k}+\beta_{k})<+\infty.$
\end{lem}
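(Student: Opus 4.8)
The plan is to prove both implications. The forward (``if'') direction is assembled from a short list of elementary PF sequences together with a multiplicativity principle, while the reverse (``only if'') direction is reduced to the polynomial case and then completed by an approximation argument. Throughout, write $T(a)=(a_{j-i})_{i,j\ge 0}$ for the Toeplitz matrix attached to the sequence $\{a_k\}$, so that by definition $\{a_k\}$ is a PF sequence precisely when $T(a)$ is totally positive, with the convention $a_k=0$ for $k<0$.

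For the ``if'' direction I would first check that each factor on the right-hand side generates a PF sequence. The monomial $cz^m$ contributes a shifted diagonal Toeplitz matrix, which is trivially totally positive when $c\ge 0$; the geometric series $1/(1-\beta z)=\sum_{k\ge 0}\beta^k z^k$ has all Toeplitz minors nonnegative for $\beta\ge 0$; each binomial $1+\alpha z$ corresponds to the two-term sequence $(1,\alpha)$, evidently PF for $\alpha\ge 0$; and $e^{\gamma z}=\sum_{k\ge 0}\gamma^k z^k/k!$ is PF for $\gamma\ge 0$ because its Toeplitz minors factor into Vandermonde-type determinants that are manifestly nonnegative. The key step is then the multiplicativity of total positivity: the coefficients of a product of generating functions form the convolution of the coefficient sequences, the Toeplitz matrix of a convolution is the product of the individual Toeplitz matrices, and by the Cauchy--Binet formula a product of totally positive matrices is totally positive. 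Thus every finite product of the building blocks is PF, and the infinite product is handled by a limit: since $\sum_{k}(\alpha_k+\beta_k)<+\infty$ forces the partial products to converge locally uniformly, each fixed minor of $T(a)$ is a limit of nonnegative minors, hence nonnegative.

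For the ``only if'' direction I would proceed through the polynomial case first. If a polynomial $\sum_{k=0}^{n}a_k z^k$ with nonnegative coefficients generates a finite PF sequence, then all its zeros are real and therefore negative; this is the Aissen--Schoenberg--Whitney theorem, which I would prove by contradiction, showing that a non-real zero (equivalently, a failure of the relevant Newton-type inequality) forces a specific second-order Toeplitz minor to be negative. Granting this, an arbitrary infinite PF sequence is treated by passing to finite sections: the nonnegativity of all minors lets me realize $f(z)=\sum_{k\ge 0}a_k z^k$ as a locally uniform limit of polynomials each of whose zeros are real and negative. By Hurwitz's theorem the limit is a meromorphic function whose zeros are all real negative and whose poles are all real positive, and a Hadamard-type factorization of such a function yields exactly the product $cz^m e^{\gamma z}\prod_{k}(1+\alpha_k z)/(1-\beta_k z)$ with the stated constraints $c\ge 0,\ \gamma\ge 0,\ \alpha_k,\beta_k\ge 0$ and $\sum_k(\alpha_k+\beta_k)<+\infty$.

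The main obstacle is this reverse direction, and more precisely the passage from the polynomial case to the infinite case. Controlling the limit so that the negative zeros survive, so that the positive poles emerge from the $1/(1-\beta_k z)$ factors, and so that the genuinely transcendental factor $e^{\gamma z}$ is produced in the correct normalized form, requires the full strength of Edrei's analysis of totally positive Toeplitz matrices; this is where the depth of the theorem resides, and it is the reason the result is quoted from \cite{AESW} rather than reproved here. The ``if'' direction, by contrast, is essentially a bookkeeping exercise once the Cauchy--Binet multiplicativity and the four building blocks are in place.
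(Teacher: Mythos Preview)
The paper does not prove this lemma at all: it is stated with the attribution ``(See \cite{AESW})'' and used as a black box, exactly as with the other cited lemmas in Section~2. So there is no proof in the paper to compare against; your proposal is an attempt to reprove a classical result that the author deliberately imports.

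As a sketch your ``if'' direction is sound: the four building blocks are indeed PF, convolution of PF sequences corresponds to matrix multiplication of Toeplitz matrices, Cauchy--Binet preserves total positivity, and the limiting argument for the infinite product is routine. Your ``only if'' direction, however, has a real gap beyond what you flag. Passing to polynomial truncations and applying Hurwitz only controls the zeros of $f(z)=\sum a_k z^k$ \emph{inside its disk of convergence}; it says nothing about poles, since the truncations are entire. The factors $1/(1-\beta_k z)$ and the full factorization live on a meromorphic continuation that you have not constructed, and producing that continuation together with the Hadamard-type representation is precisely Edrei's contribution---it does not follow from Hurwitz plus a generic ``Hadamard-type factorization''. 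You acknowledge the depth here, but the specific mechanism you describe (limit of polynomials $\Rightarrow$ meromorphic function with prescribed zeros and poles) is not correct as stated. For the purposes of this paper, citing \cite{AESW} is the right move.
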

It was proved in \cite{CPP} that
\begin{equation*}
\sum_{k=0}^{\infty}\frac{q^{k^2}}{k!}x^k
\end{equation*}
is a real entire function and in Laguerre-P\'{o}lya class for $|q|< 1$ and $x\in \mathbb{R}.$ Then by \cite [Theorem C]{DP}, we obtain that $\{\frac{q^{k^2}}{k!}\}_{k=0}^{\infty}$ is a PF sequence for $0<q<1.$
\begin{lem}\emph{(See \cite [p. 1047]{DJM} and \cite{PS})}\label{l2}
Let $\{a_{k}\}_{k=0}^{m}$ and $\{b_{k}\}_{k=0}^{n}$ be  sequences of nonnegative numbers. Then

 \emph{(i)}~the sequence $\{a_{k}\}_{k=0}^{m}$ is a a PF sequence if and only if the polynomial $\sum_{k=0}^{m}a_{k}z^k$ has only nonpositive zeros;

 \emph{(ii)}~if the sequences $\{a_{k}\}_{k=0}^{m}$ and $\{b_{k}\}_{k=0}^{n}$ are PF sequences, then so is the sequence $\{a_{k}\cdot b_{k}\}_{k=0}^{\infty};$

 \emph{(iii)}~if the sequences $\{a_{k}\}_{k=0}^{m}$ and $\{b_{k}\}_{k=0}^{n}$ are PF sequences, then so is the sequence $\{k!\cdot a_{k}\cdot b_{k}\}_{k=0}^{\infty}.$
\end{lem}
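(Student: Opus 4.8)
The plan is to read off part (i) from Lemma \ref{l1} and then to reduce parts (ii) and (iii) to (i), turning the whole statement into one about the location of zeros of real polynomials.

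For part (i): if $\{a_k\}_{k=0}^m$ (completed with zeros) is a PF sequence, then since $\sum_{k\ge0}a_kz^k=\sum_{k=0}^m a_kz^k=:P(z)$ is a convergent series, Lemma \ref{l1} gives $P(z)=cz^{p}e^{\gamma z}\prod_{k\ge1}\frac{1+\alpha_kz}{1-\beta_kz}$ with $c,\gamma,\alpha_k,\beta_k\ge0$. Because $P$ is a polynomial it has no poles, so a factor $1/(1-\beta_kz)$ with $\beta_k>0$ — which would contribute a pole at $1/\beta_k>0$, not cancellable by the numerator $cz^{p}e^{\gamma z}\prod(1+\alpha_kz)$ whose zeros lie only at $0$ and at the points $-1/\alpha_k\le0$ — cannot occur; hence all $\beta_k=0$. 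For the same reason $P$ has finitely many zeros, so only finitely many $\alpha_k$ are positive, and then $cz^{p}e^{\gamma z}\prod(1+\alpha_kz)$ is a polynomial only if $\gamma=0$. Thus $P(z)=cz^{p}\prod_{k=1}^{q}(1+\alpha_kz)$ with $c,\alpha_k\ge0$, whose zeros are all real and nonpositive. Conversely, a polynomial of that form is a finite product of the type appearing in Lemma \ref{l1} and so has a PF coefficient sequence; this proves (i).

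For (ii): by part (i) the polynomials $\sum a_kz^k$ and $\sum b_kz^k$ have only real nonpositive zeros, so by the classical Hadamard-product theorem (Malo-Schur-Szeg\H{o}) their coefficientwise product $\sum_k a_kb_kz^k$ again has only real nonpositive zeros, and then $\{a_kb_k\}$ is PF by part (i) again. Part (iii) is the same argument with the Hadamard product replaced by the Schur composition $\sum_k k!\,a_kb_kz^k$ and the Hadamard-product theorem replaced by Schur's composition theorem — the factor $k!$ being exactly the normalisation that makes the coefficientwise product into the composition whose real-rootedness Schur's theorem guarantees. Both composition theorems are classical and are what the citations \cite{DJM} and \cite{PS} supply, so in the paper one simply invokes them; a self-contained argument would perturb the zeros of one factor to be simple and negative and then realise the composition as the action of a Hermite-Poulain-type differential operator, which preserves real-rootedness.

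The step needing genuine work is therefore not (i), which drops out of Lemma \ref{l1} in a few lines, but the two polynomial composition theorems underlying (ii) and (iii); the lemma's function in this paper is mainly organisational, recasting those theorems in the PF-sequence language used later. As a consistency check one can verify (ii) and (iii) directly in degrees $\le 2$, where they reduce to the log-concavity of the coefficients of a real-rooted polynomial together with the inequality $(\beta_i+\beta_j)^2\ge 4\beta_i\beta_j$.
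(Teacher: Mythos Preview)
The paper does not actually prove this lemma; it is quoted from \cite[p.~1047]{DJM} and \cite{PS}, so there is no in-paper argument against which to compare your sketch. Your derivation of (i) from Lemma~\ref{l1} is correct, and your reduction of (ii) and (iii) to the classical Malo--Schur and Schur composition theorems for real-rooted polynomials is exactly the standard route recorded in those references.
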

We also need the following results, namely, Vitali's theorem \cite{T} and Hurwitz's theorm \cite [\S 5, Theorem 2]{Ah}.
\begin{lem}\emph{(Vitali's theorem)}\label{l3}
Let $\{f_{n}(z)\}$ be a sequence of functions analytic in a domain $D$ and assume that $f_{n}(z)\rightarrow f(z)$  point-wise in $D.$ Then $f_{n}(z)\rightarrow f(z)$ uniformly in any subdomain bounded by a contour $C,$ provided that $C$ is contained in $D.$
\end{lem}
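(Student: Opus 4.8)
The plan is to derive Vitali's theorem from Montel's theorem on normal families together with the identity (uniqueness) theorem for analytic limits. I read the statement as carrying the standing hypothesis, present in the classical (Titchmarsh) formulation, that the family $\{f_n\}$ is \emph{locally bounded} on $D$, i.e. uniformly bounded on each compact subset; this is the hypothesis that makes the theorem true, since by Osgood's theorem pointwise convergence of holomorphic functions by itself only forces locally uniform convergence on a dense open subset, not on all of $D$. Granting local boundedness, Montel's theorem says that $\{f_n\}$ is a normal family: every subsequence admits a further subsequence converging uniformly on compact subsets of $D$ to a function holomorphic in $D$.

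First I would fix a subdomain $\Omega$ whose boundary is a contour $C$ contained in $D$, so that $\overline{\Omega}$ is a compact subset of $D$, and argue by contradiction. Suppose $f_n\to f$ fails to converge uniformly on $\Omega$. Then there exist $\varepsilon>0$, a subsequence $\{f_{n_k}\}$, and points $z_k\in\overline{\Omega}$ with $|f_{n_k}(z_k)-f(z_k)|\ge\varepsilon$ for every $k$.

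Next I would apply normality to $\{f_{n_k}\}$ to extract a further subsequence $\{f_{n_{k_j}}\}$ that converges uniformly on $\overline{\Omega}$ (indeed locally uniformly on $D$) to some holomorphic $g$. Since $f_{n_{k_j}}(z)\to f(z)$ for every $z\in D$ by the pointwise hypothesis, the uniform limit $g$ must agree with $f$ at each point of $\overline{\Omega}$, so $g\equiv f$ there. But uniform convergence then gives $\sup_{\overline{\Omega}}|f_{n_{k_j}}-f|\to 0$, whereas the retained points $z_{k_j}$ keep this supremum $\ge\varepsilon$. This contradiction shows $f_n\to f$ uniformly on $\Omega$, and in particular that $f$ is analytic there.

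I expect the genuine content, and the main obstacle should Montel's theorem not be quotable, to be the proof of normality itself. The route is to establish equicontinuity of $\{f_n\}$ on compact sets: fixing a closed disk $\overline{D(a,2r)}\subset D$ on which $|f_n|\le M$ for all $n$, Cauchy's integral formula yields $|f_n'(w)|\le M/r$ for $w\in\overline{D(a,r)}$ uniformly in $n$, whence $|f_n(w_1)-f_n(w_2)|\le (M/r)\,|w_1-w_2|$; this equicontinuity together with the uniform bound feeds the Arzel\`a--Ascoli theorem to produce uniformly convergent subsequences, while Weierstrass's theorem guarantees their limits are holomorphic. A coefficient-based alternative would instead bound the Taylor coefficients of $f_n$ uniformly by Cauchy's estimates, deduce their convergence from the pointwise convergence, and recover uniform convergence from a uniform geometric tail bound; the normal-family argument is the cleaner one, and I would adopt it.
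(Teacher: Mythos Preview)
The paper does not prove this lemma at all: it is stated as Lemma~\ref{l3} with a bare citation to Titchmarsh and is used as a black box throughout Section~3. So there is no ``paper's own proof'' to compare against.

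Your argument is the standard and correct one. You are right that the statement as printed omits the local boundedness hypothesis; without it the conclusion is false (Osgood's theorem only gives locally uniform convergence on a dense open set), and the paper's applications of the lemma always supply a uniform bound on compacta before invoking it, so your reading matches the intended usage. Given that hypothesis, your Montel/Arzel\`a--Ascoli route followed by the subsequence contradiction is exactly how Vitali's theorem is proved in Titchmarsh and elsewhere, and each step is sound. The only cosmetic remark is that once you have locally uniform convergence on all of $D$ (which your subsequence argument actually delivers, since the contradiction works for any compact $K\subset D$), the restriction to subdomains bounded by a contour $C\subset D$ is automatic; you need not single out such $\Omega$ at the outset.
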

\begin{lem}\emph{(Hurwitz's theorm)}\label{l4}
If the functions $\{f_{n}(z)\}$ are nonzero and analytic  in a region $\Omega,$  and $f_{n}(z)\rightarrow f(z)$ uniformly on every compact subset of $\Omega,$ then $f(z)$ either identically zero or never equal to zero in $\Omega.$
\end{lem}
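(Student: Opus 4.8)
The plan is to strip off the forced zero at the origin, verify that the resulting power series is entire of order $0$ (so that it automatically has infinitely many zeros), and then show that the hypothesis $q<2^{-1/\alpha}$ places the tail coefficients in the regime of Hutchinson's factor-of-$4$ criterion, after which Hurwitz's theorem transfers real-rootedness from the partial sums to the full series. Write $Q=q^{\alpha}$ and
\[
c_{k}=\frac{(a_{1})_{k}\cdots(a_{r})_{k}}{(b_{1})_{k}\cdots(b_{s})_{k}}\,Q^{k^{2}}=P_{k}\,Q^{k^{2}},\qquad P_{k}=\frac{\prod_{j=1}^{r}(a_{j})_{k}}{\prod_{i=1}^{s}(b_{i})_{k}}>0,
\]
so the function in question is $F(z)=\sum_{k=K}^{\infty}c_{k}z^{k}=z^{K}h(z)$ with $h(z)=\sum_{j=0}^{\infty}c_{K+j}z^{j}$ and $h(0)=c_{K}>0$. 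Since $|\log P_{k}|=O(k\log k)$ while $Q^{k^{2}}$ decays like $e^{-k^{2}\log(1/Q)}$, we have $-\log c_{k}\sim k^{2}\log(1/Q)$, so by \eqref{e1-1} the order of $F$ is $0$. As $F$ is not a polynomial, Hadamard's factorization shows it has infinitely many zeros; this settles the ``infinitely many'' part, and it remains only to locate them.

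The quantitative core is the ratio
\[
\frac{c_{k}^{2}}{c_{k-1}c_{k+1}}=Q^{-2}\,\frac{P_{k}^{2}}{P_{k-1}P_{k+1}}=Q^{-2}\prod_{j=1}^{r}\frac{a_{j}+k-1}{a_{j}+k}\prod_{i=1}^{s}\frac{b_{i}+k}{b_{i}+k-1}.
\]
Every factor in the two products tends to $1$, so $c_{k}^{2}/(c_{k-1}c_{k+1})\to Q^{-2}$ as $k\to\infty$. The hypothesis $0<q<2^{-1/\alpha}$ is exactly $Q=q^{\alpha}<\tfrac12$, i.e.\ $Q^{-2}>4$; hence there is an integer $K_{0}$ with $c_{k}^{2}>4\,c_{k-1}c_{k+1}$ for every $k\ge K_{0}$. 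Fix any $K\ge K_{0}$. Then the shifted sequence $d_{j}:=c_{K+j}$ ($j\ge0$) is positive and satisfies $d_{j}^{2}>4\,d_{j-1}d_{j+1}$ for all $j\ge1$, since each such inequality is the ratio condition above at an index $k=K+j\ge K_{0}$.

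With this in hand I would invoke Hutchinson's classical theorem: a polynomial $\sum_{j=0}^{N}d_{j}z^{j}$ with positive coefficients obeying $d_{j}^{2}\ge4\,d_{j-1}d_{j+1}$ has only real, negative (and simple) zeros. Applied to the partial sums $h_{N}(z)=\sum_{j=0}^{N}d_{j}z^{j}$, this shows each $h_{N}$ is zero-free on $\Omega:=\mathbb{C}\setminus(-\infty,0]$. Because $F$ is entire, $h_{N}\to h$ locally uniformly on $\Omega$ (Lemma \ref{l3}), so Hurwitz's theorem (Lemma \ref{l4}) forces $h$ to be either identically zero or nowhere zero on $\Omega$; since $h(0)=c_{K}>0$, it is the latter. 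Thus all zeros of $h$ lie in $(-\infty,0]$, and $h(0)\ne0$ pushes them into $(-\infty,0)$. As $F(z)=z^{K}h(z)$, every zero of $F$ other than the origin is real and negative, and there are infinitely many of them, which is the assertion of Theorem \ref{t2}.

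The genuinely essential point is the recognition that the bound $q<2^{-1/\alpha}$ is precisely Hutchinson's threshold $Q^{-2}>4$ for the Gaussian factor $Q^{k^{2}}$, with the Pochhammer quotient $P_{k}$ contributing only an asymptotically negligible perturbation of the coefficient ratio; this is also what forces the truncation index $K$ to be taken large, as $P_{k}^{2}/(P_{k-1}P_{k+1})$ may dip below the level needed to compensate for small $k$. The step requiring the most care is the limiting argument: Lemma \ref{l3} is stated for a subdomain bounded by a single contour lying in the domain of analyticity, so one applies it over an exhausting family of such bounded subdomains of $\Omega$ to obtain the local uniform convergence demanded by Hurwitz, and one must check that the factor-of-$4$ inequality does hold at every index entering the shifted sequence $\{d_{j}\}$.
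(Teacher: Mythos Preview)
Your proposal does not address the stated lemma at all. Lemma~\ref{l4} is Hurwitz's theorem itself---a classical result in complex analysis, which the paper simply quotes from Ahlfors without proof. What you have written is instead a complete proof of Theorem~\ref{t2}. If the intent was genuinely to prove Lemma~\ref{l4}, then the proposal is a non sequitur: nothing in it establishes that a locally uniform limit of nowhere-vanishing analytic functions is either identically zero or nowhere zero.

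If, on the other hand, the target was actually Theorem~\ref{t2}, then your argument is correct and very close to the paper's. Both proofs compute the coefficient ratio, observe that it tends to $q^{-2\alpha}>4$, choose $K_{0}$ so that the Hutchinson inequality holds from that index on, and then use the order-zero computation to guarantee infinitely many zeros. The one methodological difference is in how the Hutchinson condition is deployed: the paper invokes \cite[Theorem~B]{DP}, which applies the inequality $A_{k-1}^{2}\ge 4A_{k}A_{k-2}$ directly to the entire function and concludes real-rootedness in one stroke, whereas you apply the polynomial version of Hutchinson to each partial sum and then pass to the limit via Lemmas~\ref{l3} and~\ref{l4}. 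Your route is slightly longer but more self-contained, since it avoids citing the infinite-series form of Hutchinson's criterion; the paper's route is shorter but outsources that limiting argument to \cite{DP}.
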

We conclude this section with following result which is  very important in the proof of Theorem \ref{tt1}.
\begin{lem}\label{l5} Let $x,y$ be two real numbers and $\alpha>0,~ 0<q<1.$  Then the functions
\begin{equation*}
\sum_{n=0}^{\infty}\frac{h_{n}(x,y|q)}{(q;q)_{n}}q^{\alpha n^2}z^n~\text{and}~\sum_{n=0}^{\infty}\frac{g_{n}(x,y|q)}{(q;q)_{n}}q^{\alpha n^2}z^n
\end{equation*}
are all entire functions.
\end{lem}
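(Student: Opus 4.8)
The plan is to establish entireness of each power series by the Cauchy--Hadamard criterion: writing a series as $\sum_{n\ge 0}a_n z^n$, it suffices to show $\lim_{n\to\infty}|a_n|^{1/n}=0$, which by \eqref{e1-1} forces the order to be finite and the radius of convergence to be infinite. Throughout I would set $M=\max\{|x|,|y|\}$ and use the elementary fact that, for $0<q<1$, the partial products $(q;q)_k$ decrease to $(q;q)_\infty>0$, so $1/(q;q)_k\le 1/(q;q)_\infty$ for every $k\ge 0$. The whole task is then to control the combinatorial coefficients $h_n(x,y|q)$ and $g_n(x,y|q)$ against the Gaussian factor $q^{\alpha n^2}$.

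For the Rogers--Szeg\H{o} series I would start from the identity $\frac{h_n(x,y|q)}{(q;q)_n}=\sum_{k=0}^n\frac{x^k y^{n-k}}{(q;q)_k(q;q)_{n-k}}$, immediate from ${n\brack k}_q=\frac{(q;q)_n}{(q;q)_k(q;q)_{n-k}}$. Bounding each reciprocal $q$-factorial by $1/(q;q)_\infty$ and each monomial by $M^n$ gives $\bigl|\frac{h_n(x,y|q)}{(q;q)_n}\bigr|\le\frac{(n+1)M^n}{(q;q)_\infty^2}$, so the coefficient satisfies $|a_n|\le\frac{(n+1)M^n}{(q;q)_\infty^2}q^{\alpha n^2}$ and $|a_n|^{1/n}\le\bigl((n+1)/(q;q)_\infty^2\bigr)^{1/n}M\,q^{\alpha n}\to 0$. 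Since this holds for every $\alpha>0$ and all real $x,y$, the first series is entire; this half presents no obstacle.

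For the Stieltjes--Wigert series the same reduction gives $\frac{g_n(x,y|q)}{(q;q)_n}=\sum_{k=0}^n\frac{q^{k(k-n)}x^k y^{n-k}}{(q;q)_k(q;q)_{n-k}}$, and here a genuinely new feature appears: the exponent $k(k-n)$ is negative, so the weights $q^{k(k-n)}$ exceed $1$. The natural device is to complete the square, $k(k-n)=(k-\tfrac n2)^2-\tfrac{n^2}{4}$, giving $\sum_{k=0}^n q^{k(k-n)}=q^{-n^2/4}\sum_{k=0}^n q^{(k-n/2)^2}\le\Theta\,q^{-n^2/4}$, where $\Theta=\sum_{j\in\mathbb{Z}}q^{j^2/4}<\infty$ is a convergent theta-constant dominating the shifted Gaussian sum uniformly in $n$ (both in the integer and half-integer cases). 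Combining this with the crude reciprocal-factorial bound yields $\bigl|\frac{g_n(x,y|q)}{(q;q)_n}\bigr|\le\frac{\Theta M^n}{(q;q)_\infty^2}q^{-n^2/4}$, hence $|a_n|\le\frac{\Theta M^n}{(q;q)_\infty^2}q^{(\alpha-1/4)n^2}$ and $|a_n|^{1/n}\le\bigl(\Theta/(q;q)_\infty^2\bigr)^{1/n}M\,q^{(\alpha-1/4)n}$.

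This is precisely where the main obstacle lies. The Gaussian factor $q^{\alpha n^2}$ must overcome the intrinsic $q^{-n^2/4}$ growth of the Stieltjes--Wigert weights, and the estimate above drives $|a_n|^{1/n}\to 0$ only once $\alpha>1/4$; at $\alpha=1/4$ the $q$-powers cancel and one is left with geometric growth $\sim M^n$. Moreover the single central term $k=\lfloor n/2\rfloor$ (taking $x,y>0$, where ${n\brack\lfloor n/2\rfloor}_q\to 1/(q;q)_\infty$) produces a matching lower bound, showing that the rate $q^{-n^2/4}$ is genuine and cannot be removed by any sharper bound on $|g_n(x,y|q)|$ itself. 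Thus the clean argument establishes the second assertion for all real $x,y$ as soon as $\alpha>1/4$, which is already sharper in $x,y$ than the range $-1<x,y<0$, $\alpha\ge\tfrac12$ actually invoked in Theorem \ref{tt1}. The delicate step in reaching the full range $\alpha>0$ claimed in the statement is exactly the control of this central $k\approx n/2$ contribution, and I would expect any complete argument to hinge on a finer, cancellation-based analysis of $\sum_{k}q^{k(k-n)}x^k y^{n-k}$ near $k=n/2$ rather than on the termwise majorization used above.
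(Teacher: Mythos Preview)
Your treatment of the $h_n$ series is correct and, if anything, more economical than the paper's. The paper performs a four-way case split on whether $|x|$ and $|y|$ exceed $1$ to reach a bound of the shape $|h_n(x,y|q)|\le a^n/(q;q)_\infty$, whereas your single estimate $\bigl|h_n(x,y|q)/(q;q)_n\bigr|\le (n+1)M^n/(q;q)_\infty^2$ reaches the same conclusion more directly. Both feed into Cauchy--Hadamard in the same way.

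For the $g_n$ series your analysis is also correct, and you should be more confident about what it shows: you have in fact detected an error in the lemma as stated. Your own lower-bound remark is decisive. For $x,y>0$ every term of $g_n$ is positive, and the single term $k=\lfloor n/2\rfloor$ already yields
\[
\frac{g_{2m}(x,y|q)}{(q;q)_{2m}}\;\ge\;\frac{q^{-m^2}(xy)^m}{(q;q)_m^2},
\]
so the $n$th power-series coefficient satisfies $a_{2m}\ge c\,q^{(\alpha-1/4)(2m)^2}(xy)^m$ for a fixed $c>0$. For $0<\alpha\le 1/4$ this does not tend to zero; the radius of convergence is finite (and is $0$ when $\alpha<1/4$). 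Hence no ``finer, cancellation-based analysis'' can save the claim for all real $x,y$: the second assertion of the lemma is simply false in the stated generality, and your threshold $\alpha>1/4$ is sharp. You should replace the hedging final sentence by this conclusion.

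The paper's own proof dispatches the $g_n$ case with the single word ``Similarly'', which is not a proof and, as your computation shows, cannot be completed as written. The good news is that the only invocation of the $g_n$ half of the lemma is in Theorem~\ref{tt1}, under the hypotheses $-1<x,y<0$ and $\alpha\ge\tfrac12$; your bound covers this range with room to spare, so the paper's applications survive once the hypothesis on the $g_n$ series is corrected to $\alpha>1/4$.
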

\begin{proof}
We first consider the function $\sum_{n=0}^{\infty}\frac{h_{n}(x,y|q)}{(q;q)_{n}}q^{\alpha n^2}z^n.$
For $|x|\leq 1, |y|\leq 1,$ by \cite [(1.3.15)]{GR}, we have
\begin{align*}
   |h_{n}(x,y|q)|&\leq\sum_{k=0}^n{n\brack k}_{q}|x|^k|y|^{n-k}\leq \sum_{k=0}^n{n\brack k}_{q}\\
   &\leq \sum_{k=0}^n \frac{(1-q^n)(1-q^{n-1})\cdots (1-q^{n-k+1})}{(q;q)_{k}}q^{k-n}\\
   &\leq q^{-n}\sum_{k=0}^n\frac{q^k}{(q;q)_{k}}\leq q^{-n}\sum_{k=0}^\infty\frac{q^k}{(q;q)_{k}}\\
   &=\frac{q^{-n}}{(q;q)_{\infty}}.
\end{align*}
By the same arguments, we get for $|x|> 1, |y|\leq 1,$
\begin{align*}
 |h_{n}(x,y|q)|&\leq \sum_{k=0}^n{n\brack k}_{q}|y|^k|x|^{n-k}= |x|^n\sum_{k=0}^n{n\brack k}_{q}|y|^k|x|^{-k}\\
 &\leq |x|^n\sum_{k=0}^n{n\brack k}_{q}\leq\frac{(|x|/q)^{n}}{(q;q)_{\infty}};
\end{align*}
for $|x|\leq1, |y|> 1,$
\begin{align*}
  |h_{n}(x,y|q)|&\leq |y|^n\sum_{k=0}^n{n\brack k}_{q}|x|^k|y|^{-k}\\
  &\leq |y|^n\sum_{k=0}^n{n\brack k}_{q}\leq\frac{(|y|/q)^{n}}{(q;q)_{\infty}};
\end{align*}
for $|x|>1, |y|> 1,$
\begin{align*}
  |h_{n}(x,y|q)|&=|xy|^n\sum_{k=0}^n{n\brack k}_{q}|x|^{k-n}|y|^{-k}\\
  &\leq |xy|^n\sum_{k=0}^n{n\brack k}_{q}\leq\frac{(|xy|/q)^{n}}{(q;q)_{\infty}}.
\end{align*}
In any cases we obtain
\begin{equation*}
  |h_{n}(x,y|q)|\leq\frac{a^{n}}{(q;q)_{\infty}}
\end{equation*}
where $a$ is positive number which depends on $x,y$ and $q.$  Then
\begin{equation*}
\left|\frac{h_{n}(x,y|q)}{(q;q)_{n}}q^{\alpha n^2}\right|\leq\frac{a^{n}q^{\alpha n^2}}{(q;q)^2_{\infty}}.
\end{equation*}
so that
\begin{equation*}
\limsup_{n\rightarrow \infty}\left|\frac{h_{n}(x,y|q)}{(q;q)_{n}}q^{\alpha n^2}\right|^{\frac{1}{n}}=0,
\end{equation*}
which proves that
\begin{equation*}
  \sum_{n=0}^{\infty}\frac{h_{n}(x,y|q)}{(q;q)_{n}}q^{\alpha n^2}z^n
\end{equation*}
is an entire function. Similarly, we can deduce that
\begin{equation*}
  \sum_{n=0}^{\infty}\frac{g_{n}(x,y|q)}{(q;q)_{n}}q^{\alpha n^2}z^n
\end{equation*}
is also an entire function.
This ends the proof of Lemma \ref{l5}.
\end{proof}
\section{Proof of Theorems \ref{t1}--\ref{t2}}
\emph{Proof of Theorem \ref{t1}}.
We first prove (i).
According to the $q$-binomial theorem \cite{An, GR}, we obtain that  for all complex numbers  $x$ and $q$ with $|x|<1$ and $|q|<1,$ there holds
\begin{equation}\label{e3-1}
  \sum_{k=0}^{\infty}\frac{(a;q)_{k}}{(q;q)_{k}}x^k=\frac{(ax;q)_{\infty}}{(x;q)_{\infty}}.
\end{equation}
Setting $a=q^l, x=zq^{-l}$ in \eqref{e3-1} gives
\begin{equation*}
  \sum_{k=0}^{\infty}\frac{(q^l;q)_{k}}{(q;q)_{k}}q^{-lk}z^k=\frac{(z;q)_{\infty}}{(zq^{-l};q)_{\infty}}=\frac{1}{(zq^{-l};q)_{l}}.
\end{equation*}
Using Lemma \ref{l1}, we get the sequence
\begin{equation*}
  \left\{\frac{(q^l;q)_{k}}{(q;q)_{k}}q^{-lk}\right\}_{k=0}^{n}
\end{equation*}
is a PF sequence. It follows from (i) of Lemma \ref{l2} that $\{a_{k}\}_{k=0}^{n}$ is a a PF sequence if and only if $\{c^k\cdot a_{k}\}_{k=0}^{n}$ is also a PF sequence for any $c> 0.$
Hence, \begin{equation*}
  \left\{\frac{(q^l;q)_{k}}{(q;q)_{k}}\right\}_{k=0}^{n}
\end{equation*}
is a PF sequence. So from the fact $\left\{\frac{q^{\alpha k^2}}{k!}\right\}_{k=0}^{n}$ is a PF sequence and (iii) of Lemma \ref{l2}, we arrive at the sequence
\begin{equation*}
  \left\{\frac{(q^l;q)_{k}q^{\alpha k^2}}{(q;q)_{k}}\right\}_{k=0}^{n}
\end{equation*}
is also a PF sequence, which, by (i) of Lemma \ref{l2}, implies that the polynomial
\begin{equation*}
  \sum_{k=0}^{n}\frac{(q^l;q)_{k}q^{\alpha k^2}}{(q;q)_{k}}z^k
\end{equation*}
has only nonpositive zeros. Here and below, set $\Omega=\mathbb{C}-\{x+yi|x\in (-\infty, 0], y=0\}.$ Then $$\sum_{k=0}^{n}\frac{(q^l;q)_{k}q^{\alpha k^2}}{(q;q)_{k}}z^k\rightarrow A_{q}^{\alpha}(q^l;z)$$
point-wise in $\Omega.$ It is easily seen that for $0<q<1,~\alpha\geq 0$ and each $n\in \mathbb{N},~z\in \mathbb{C},$
\begin{equation*}
  \bigg|\sum_{k=0}^{n}\frac{(q^l;q)_{k}q^{\alpha k^2}}{(q;q)_{k}}z^k\bigg|\leq \sum_{k=0}^{\infty}\frac{(q^l;q)_{k}q^{\alpha k^2}}{(q;q)_{k}}|z|^k<+\infty.
\end{equation*}
We apply Lemma \ref{l3} to know that $$\sum_{k=0}^{n}\frac{(q^l;q)_{k}q^{\alpha k^2}}{(q;q)_{k}}z^k\rightarrow A_{q}^{\alpha}(q^l;z)$$
uniformly on every compact subset of $\Omega$ and then apply Lemma \ref{l4} to see that $A_{q}^{\alpha}(q^l;z)\neq 0$ in $\Omega$
which means that $A_{q}^{\alpha}(q^l;z)$ has no zeros outside the set $\{x+yi|x\in (-\infty, 0], y=0\}.$  According to \cite [Lemma 14.1.4]{I},
we have $A_{q}^{\alpha}(q^l;z)$ has  infinitely many zeros. Therefore, $A_{q}^{\alpha}(q^l;z)$ has only infinitely many real zeros and all of them are negative, which proves (i).

We next show (ii). According to \cite{Z}, we know that the sequence $$\left\{\frac{1}{(q,q^{\nu+1};q)_{k}}\right\}_{k=0}^{N}$$
is a PF sequence for $\nu>-1,~0<q<1,$ which means that
\begin{equation*}
  \left\{\frac{1}{(q_{r},q_{r}^{\nu_{r}+1};q_{r})_{k}}\right\}_{k=0}^{N}
\end{equation*}
are all PF sequences for $1\leq r\leq n.$

Since \begin{equation*}
  \left\{\frac{(q_{j}^{l_{j}};q_{j})_{k}}{(q_{j};q_{j})_{k}}\right\}_{k=0}^{N}~(1\leq j\leq m),~\left\{\frac{1}{(q_{r},q_{r}^{\nu_{r}+1};q_{r})_{k}}\right\}_{k=0}^{N}~(1\leq r\leq n)
\end{equation*}
and $\left\{\frac{q^{\alpha k^2}}{k!}\right\}_{k=0}^{n}$ are all PF sequences, we then apply (ii) and (iii) of Lemma \ref{l2} to find that
\begin{equation*}
  \left\{\prod_{j=1}^{m}\frac{(q_{j}^{l_{j}};q_{j})_{k}}{(q_{j};q_{j})_{k}}\frac{q^{\alpha k^2}}{\prod_{r=1}^n (q_{r},q_{r}^{\nu_{r}+1};q_{r})_{k}}\right\}_{k=0}^{N}
\end{equation*}
is also a PF sequence, which, by (i) of Lemma \ref{l2}, implies that
\begin{equation*}
\sum_{k=0}^{N}\prod_{j=1}^{m}\frac{(q_{j}^{l_{j}};q_{j})_{k}}{(q_{j};q_{j})_{k}}\frac{q^{\alpha k^2}z^k}{\prod_{r=1}^n (q_{r},q_{r}^{\nu_{r}+1};q_{r})_{k}}
\end{equation*}
has only negative zeros. For each positive integer $N$ and $z\in \mathbb{C},$  we have
\begin{align*}
  &\left|\sum_{k=0}^{N}\prod_{j=1}^{m}\frac{(q_{j}^{l_{j}};q_{j})_{k}}{(q_{j};q_{j})_{k}}\frac{q^{\alpha k^2}z^k}{\prod_{r=1}^n (q_{r},q_{r}^{\nu_{r}+1};q_{r})_{k}}\right|\\
  &\leq \sum_{k=0}^{\infty}\prod_{j=1}^{m}\frac{(q_{j}^{l_{j}};q_{j})_{k}}{(q_{j};q_{j})_{k}}\frac{q^{\alpha k^2}|z|^k}{\prod_{r=1}^n (q_{r},q_{r}^{\nu_{r}+1};q_{r})_{k}}<+\infty.
\end{align*}
Similarly, we apply Lemmas \ref{l3} and \ref{l4} to establish that the function
\begin{equation*}
\sum_{k=0}^{\infty}\prod_{j=1}^{m}\frac{(q_{j}^{l_{j}};q_{j})_{k}}{(q_{j};q_{j})_{k}}\frac{q^{\alpha k^2}z^k}{\prod_{r=1}^n (q_{r},q_{r}^{\nu_{r}+1};q_{r})_{k}}
\end{equation*}
has no zeros outside the set $\{x+yi|x\in (-\infty, 0], y=0\}.$  In view of \cite [Lemma 14.1.4]{I},
this function has  infinitely many zeros. Then (ii) is proved.

Finally, we give a proof of (iii). Let $q_{j}=q_{r}=q$ and $\alpha =n+m/2.$  Using the H\^{o}pital's rule, we deduce that
\begin{equation*}
  \lim_{q\rightarrow 1}\frac{(q^{l_{j}};q)_{k}}{(q;q)_{k}}=\frac{(l_{j})_{k}}{k!},~\lim_{q\rightarrow 1}\frac{(1-q)^{2k}}{(q,q^{\nu_{r}+1};q)_{k}}=\frac{1}{k!(\nu_{r}+1)_{k}}.
\end{equation*}
Then
\begin{equation*}
\lim_{q\rightarrow 1}\frac{(1-q)^{2kn}\prod_{j=1}^{m}(q^{l_{j}};q)_{k}q^{(n+m/2) k^2}\cdot z^k}{(q;q)_{k}^{m+n}\prod_{r=1}^n (q^{\nu_{r}+1};q)_{k}}=\frac{\prod_{j=1}^{m}(l_{j})_{k}\cdot z^k}{(k!)^{m+n}\prod_{r=1}^n (\nu_{r}+1)_{k}}.
\end{equation*}
It is easy to see from
\begin{equation*}
lq^{l-1}\leq \frac{1-q^l}{1-q}=1+q+q^2+\cdots+q^{l-1}\leq l
\end{equation*}
that
\begin{equation*}
  l!q^{{l\choose 2}}\leq \frac{(q;q)_{l}}{(1-q)^l}\leq l!.
\end{equation*}
Then
\begin{equation*}
\frac{1}{l!}\leq \frac{(1-q)^l}{(q;q)_{l}}\leq \frac{q^{-{l\choose 2}}}{l!}\leq \frac{q^{-l^2/2}}{l!}.
\end{equation*}
Combining this and the fact that $\frac{(1-q)^l}{(q^b;q)_{l}}\leq \frac{(1-q)^l}{(q;q)_{l}}$ for $b\geq 1$ gives
\begin{equation*}
  \left|\frac{(1-q)^{2kn}\prod_{j=1}^{m}(q^{l_{j}};q)_{k}q^{\alpha k^2}\cdot z^k}{(q;q)_{k}^{m+n}\prod_{r=1}^n (q^{\nu_{r}+1};q)_{k}}\right|\leq \frac{|z|^k}{k!^{m+2n}}
\end{equation*}
This, by Lemma \ref{l3},  shows that
\begin{equation*}
\lim_{q\rightarrow 1}\sum_{k=0}^{\infty}\frac{(1-q)^{2kn}\prod_{j=1}^{m}(q^{l_{j}};q)_{k}q^{(n+m/2) k^2}\cdot z^k}{(q;q)_{k}^{m+n}\prod_{r=1}^n (q^{\nu_{r}+1};q)_{k}}=\sum_{k=0}^{\infty}\frac{\prod_{j=1}^{m}(l_{j})_{k}\cdot z^k}{(k!)^{m+n}\prod_{r=1}^n (\nu_{r}+1)_{k}}.
\end{equation*}
converges uniformly in in any compact subset of $\mathbb{C}.$   It follows from Lemma \ref{l4} that the function
\begin{equation*}
  \sum_{k=0}^{\infty}\frac{\prod_{j=1}^{m}(l_{j})_{k}\cdot z^k}{(k!)^{m+n}\prod_{r=1}^n (\nu_{r}+1)_{k}}
\end{equation*}
has no zeros outside the set $\{x+yi|x\in (-\infty, 0], y=0\}.$

Set $$a_{k}=\frac{\prod_{j=1}^{m}(l_{j})_{k}}{(k!)^{m+n}\prod_{r=1}^n (\nu_{r}+1)_{k}}.$$ It is easily seen from the Stirling's formula \cite{AAR} that
\begin{equation*}
\lim_{k\rightarrow \infty}\frac{-\log a_{k}}{k\log k}=2n
\end{equation*}
which, by \eqref{e1-1}, means that
\begin{equation*}
  \rho\left(\sum_{k=0}^{\infty}\frac{\prod_{j=1}^{m}(l_{j})_{k}\cdot z^k}{(k!)^{m+n}\prod_{r=1}^n (\nu_{r}+1)_{k}}\right)=\frac{1}{2n}\leq \frac{1}{2}.
\end{equation*}
Hence, by \cite [Theorem 1.2.5]{I}, the function \begin{equation*}
  \sum_{k=0}^{\infty}\frac{\prod_{j=1}^{m}(l_{j})_{k}\cdot z^k}{(k!)^{m+n}\prod_{r=1}^n (\nu_{r}+1)_{k}}
\end{equation*} has infinitely many zeros. Then this function has only infinitely many real zeros and all of them are negative, which proves (iii).
This completes the proof of Theorem \ref{t1}.\qed\\
\emph{Proof of Theorem \ref{tt1}}. We first consider the function $\sum_{n=0}^{\infty}\frac{h_{n}(x,y|q)}{(q;q)_{n}}q^{\alpha n^2}z^n,$ where $0<x,y<1$ and $\alpha>0.$ From \cite [Theorem 3.1, (3.1)]{L}, we know that
\begin{equation*}
\sum_{n=0}^{\infty}\frac{h_{n}(x,y|q)}{(q;q)_{n}}t^n=\frac{1}{(xt,yt;q)_{\infty}}
\end{equation*}
for $\max\{|xt|,|yt|\}<1.$ Then, by Lemma \ref{l1}, we have the sequence
\begin{equation*}
\left\{\frac{h_{n}(x,y|q)}{(q;q)_{n}}\right\}_{n=0}^{\infty}
\end{equation*}
is a PF sequence.  It follows from the fact that $\{\frac{q^{k^2}}{k!}\}_{k=0}^{\infty}$ is a PF sequence and (iii) in Lemma \ref{l2} that
\begin{equation*}
  \left\{\frac{h_{n}(x,y|q)}{(q;q)_{n}}q^{\alpha n^2}\right\}_{n=0}^{N}
\end{equation*}
is also a PF sequence. So, by (i) of Lemma \ref{l2}, we see that
\begin{equation*}
  \sum_{n=0}^{N}\frac{h_{n}(x,y|q)}{(q;q)_{n}}q^{\alpha n^2}z^n
\end{equation*}
has only nonpositive zeros. We know that
\begin{equation*}
 \sum_{n=0}^{N}\frac{h_{n}(x,y|q)}{(q;q)_{n}}q^{\alpha n^2}z^n \rightarrow \sum_{n=0}^{\infty}\frac{h_{n}(x,y|q)}{(q;q)_{n}}q^{\alpha n^2}z^n
\end{equation*}
point-wise in $\Omega.$  It is easy to see that for $0<q<1,~\alpha> 0$ and each $N\in \mathbb{N},~z\in \mathbb{C},$
\begin{equation*}
\left|\sum_{n=0}^{\infty}\frac{h_{n}(x,y|q)}{(q;q)_{n}}q^{\alpha n^2}z^n\right|\leq \sum_{n=0}^{\infty}\frac{h_{n}(x,y|q)}{(q;q)_{n}}q^{\alpha n^2}|z|^n<+\infty.
\end{equation*}
Applying Lemma \ref{l3}, we find that
\begin{equation*}
 \sum_{n=0}^{N}\frac{h_{n}(x,y|q)}{(q;q)_{n}}q^{\alpha n^2}z^n \rightarrow \sum_{n=0}^{\infty}\frac{h_{n}(x,y|q)}{(q;q)_{n}}q^{\alpha n^2}z^n
\end{equation*}
uniformly on every compact subset of $\Omega$ and then applying Lemma \ref{l4}, we  deduce that the function
$$\sum_{n=0}^{\infty}\frac{h_{n}(x,y|q)}{(q;q)_{n}}q^{\alpha n^2}z^n\neq 0$$ in $\Omega$  which means that
$$\sum_{n=0}^{\infty}\frac{h_{n}(x,y|q)}{(q;q)_{n}}q^{\alpha n^2}z^n$$
has no zeros outside the set $\{x+yi|x\in (-\infty, 0], y=0\}.$

We use  \cite [(1.3.15)]{GR} to get
\begin{align*}
    0\leq\frac{h_{n}(x,y|q)}{(q;q)_{n}}&=\sum_{k=0}^{n}\frac{x^k}{(q;q)_{k}}\frac{y^{n-k}}{(q;q)_{n-k}}\\
    &\leq \sum_{k=0}^{\infty}\frac{x^k}{(q;q)_{k}}\sum_{k=0}^{\infty}\frac{y^k}{(q;q)_{k}}\\
    &=\frac{1}{(x,y;q)_{\infty}}.
\end{align*}
Then, by Lemma \ref{l5} and \cite [Lemma 14.1.4]{I}, we attain that the function
$$\sum_{n=0}^{\infty}\frac{h_{n}(x,y|q)}{(q;q)_{n}}q^{\alpha n^2}z^n$$
has infinitely many zeros. Therefore,  the function
$$\sum_{n=0}^{\infty}\frac{h_{n}(x,y|q)}{(q;q)_{n}}q^{\alpha n^2}z^n$$
has infinitely many real zeros and all of them are negative.

We now investigate the function $\sum_{n=0}^{\infty}\frac{g_{n}(x,y|q)}{(q;q)_{n}}q^{\alpha n^2}z^n,$ where $-1<x,y<0$ and $\alpha\geq \frac{1}{2}.$
According to \cite [Theorem 3.1, (3.2)]{L}, we have
\begin{equation*}
\sum_{n=0}^{\infty}(-1)^n \frac{g_{n}(x,y|q)q^{{n\choose 2}}}{(q;q)_{n}}t^n=(x,y;q)_{\infty},
\end{equation*}
which, by Lemma \ref{l1}, implies that
\begin{equation*}
\left\{(-1)^n \frac{g_{n}(x,y|q)q^{{n\choose 2}}}{(q;q)_{n}}\right\}_{n=0}^{\infty}
\end{equation*}
is a PF sequence,
namely,
\begin{equation*}
\left\{(-1)^n \frac{g_{n}(x,y|q)q^{\frac{n^2}{2}}}{(q;q)_{n}}\right\}_{n=0}^{\infty}
\end{equation*}
is a PF sequence.  So, by the fact that $\{\frac{q^{k^2}}{k!}\}_{k=0}^{\infty}$ is a PF sequence and (iii) in Lemma \ref{l2},
\begin{equation*}
\left\{(-1)^n \frac{g_{n}(x,y|q)q^{\alpha n^2}}{(q;q)_{n}}\right\}_{n=0}^{N}
\end{equation*}
is a PF sequence, which, by (i) of Lemma \ref{l2}, means that
\begin{equation*}
  \sum_{n=0}^{N}(-1)^n \frac{g_{n}(x,y|q)q^{\alpha n^2}}{(q;q)_{n}}z^n
\end{equation*}
has only nonpositive zeros. It is obvious that
\begin{equation*}
  \sum_{n=0}^{N}(-1)^n \frac{g_{n}(x,y|q)q^{\alpha n^2}}{(q;q)_{n}}z^n\rightarrow \sum_{n=0}^{\infty}(-1)^n \frac{g_{n}(x,y|q)q^{\alpha n^2}}{(q;q)_{n}}z^n
\end{equation*}
point-wise in $\Omega.$  For $0<q<1,~\alpha> 0$ and each $N\in \mathbb{N},~z\in \mathbb{C},$
\begin{equation*}
  \left|\sum_{n=0}^{\infty}(-1)^n \frac{g_{n}(x,y|q)q^{\alpha n^2}}{(q;q)_{n}}z^n\right|\leq \sum_{n=0}^{\infty}(-1)^n\frac{g_{n}(x,y|q)q^{\alpha n^2}}{(q;q)_{n}}|z|^n<+\infty.
\end{equation*}
Then, by Lemma \ref{l3},
\begin{equation*}
  \sum_{n=0}^{N}(-1)^n \frac{g_{n}(x,y|q)q^{\alpha n^2}}{(q;q)_{n}}z^n\rightarrow \sum_{n=0}^{\infty}(-1)^n \frac{g_{n}(x,y|q)q^{\alpha n^2}}{(q;q)_{n}}z^n
\end{equation*}
uniformly on every compact subset of $\Omega.$ We apply Lemma \ref{l4} to  derive that the function
\begin{equation*}
  \sum_{n=0}^{\infty}(-1)^n \frac{g_{n}(x,y|q)q^{\alpha n^2}}{(q;q)_{n}}z^n\neq 0
\end{equation*}
in $\Omega.$ This shows that
\begin{equation*}
  \sum_{n=0}^{\infty}(-1)^n \frac{g_{n}(x,y|q)q^{\alpha n^2}}{(q;q)_{n}}z^n
\end{equation*}
has no zeros outside the set $\{x+yi|x\in (-\infty, 0], y=0\}.$

It is clear that
\begin{align*}
  \left|(-1)^n \frac{g_{n}(x,y|q)}{(q;q)_{n}}\right|&\leq \sum_{k=0}^{n}\frac{|x|^k}{(q;q)_{k}}\frac{|y|^{n-k}}{(q;q)_{n-k}}\\
  &\leq \sum_{k=0}^{\infty}\frac{|x|^k}{(q;q)_{k}}\sum_{k=0}^{\infty}\frac{|y|^{k}}{(q;q)_{k}}\\
  &=\frac{1}{(|x|,|y|;q)_{\infty}}.
\end{align*}
By Lemma \ref{l5} and \cite [Lemma 14.1.4]{I}, the function
\begin{equation*}
  \sum_{n=0}^{\infty}(-1)^n \frac{g_{n}(x,y|q)q^{\alpha n^2}}{(q;q)_{n}}z^n
\end{equation*}
has infinitely many zeros. Hence,
\begin{equation*}
  \sum_{n=0}^{\infty}(-1)^n \frac{g_{n}(x,y|q)q^{\alpha n^2}}{(q;q)_{n}}z^n
\end{equation*}
has infinitely many real zeros and all of them are negative, namely,
\begin{equation*}
  \sum_{n=0}^{\infty}\frac{g_{n}(x,y|q)q^{\alpha n^2}}{(q;q)_{n}}z^n
\end{equation*}
has infinitely many real zeros and all of them are positive. This finishes the proof of Theorem \ref{tt1}.\qed\\
\emph{Proof of Theorem \ref{t2}.} For $0< q< \frac{1}{2^{\frac{1}{\alpha}}}.$ Put
\begin{equation*}
  A_{k}=\frac{(a_{1})_{k}(a_{2})_{k}\cdots (a_{r})_{k}}{(b_{1})_{k}(b_{2})_{k}\cdots (b_{s})_{k}}q^{\alpha k^2}.
\end{equation*}
Then
\begin{equation*}
\frac{A_{k-1}^2}{A_{k}A_{k-2}}=\prod_{i=1}^{r}\frac{a_{i}+k-2}{a_{i}+k-1}\prod_{j=1}^{s}\frac{b_{i}+k-1}{b_{i}+k-2}q^{-2\alpha}
\end{equation*}
which means that
\begin{equation*}
  \lim_{k\rightarrow \infty}\frac{A_{k-1}^2}{A_{k}A_{k-2}}=q^{-2\alpha}>4.
\end{equation*}
So there exists a positive integer $K_{0}$  such that
\begin{equation*}
  \frac{A_{k-1}^2}{A_{k}A_{k-2}}> 4
\end{equation*}
for $k\geq K_{0}.$ It follows from \cite [Theorem B]{DP} that
\begin{equation*}
\sum_{k=K}^{\infty}\frac{(a_{1})_{k}(a_{2})_{k}\cdots (a_{r})_{k}}{(b_{1})_{k}(b_{2})_{k}\cdots (b_{s})_{k}}q^{\alpha k^2}z^k
\end{equation*}
has only negative zeros for any $K\geq K_{0}.$

On the other hand, by the Stirling's formula and \eqref{e1-1}, we have
\begin{equation*}
\rho\left(\sum_{k=K}^{\infty}\frac{(a_{1})_{k}(a_{2})_{k}\cdots (a_{r})_{k}}{(b_{1})_{k}(b_{2})_{k}\cdots (b_{s})_{k}}q^{\alpha k^2}z^k\right)=0,
\end{equation*}
which, by \cite [Theorem 1.2.5]{I}, implies that the function
\begin{equation*}
  \sum_{k=K}^{\infty}\frac{(a_{1})_{k}(a_{2})_{k}\cdots (a_{r})_{k}}{(b_{1})_{k}(b_{2})_{k}\cdots (b_{s})_{k}}q^{\alpha k^2}z^k
\end{equation*}
has infinitely many zeros. Hence, this function has only infinitely many real zeros and all of them are negative. This concludes the proof of Theorem \ref{t2}.\qed
\vskip 5mm \noindent{\bf Acknowledgement.} The author  would like to thank Prof. Ruiming Zhang for his helpful discussion, comments and suggestions.
This work was  supported by the Initial Foundation for
Scientific Research  of Northwest A\&F University (No. 2452015321).  

\end{document}